\theoremstyle{plain}
\newtheorem{thm}{Theorem}[section]
\newtheorem{lem}[thm]{Lemma}
\newtheorem{prop}[thm]{Proposition}
\newtheorem{cor}[thm]{Corollary}
\newtheorem{example}[thm]{Example}
\theoremstyle{definition}
\newcommand{\mZ}{\mathbb Z}
\DeclareMathOperator{\diam}{diam}
\newcommand{\cU}{\mathcal{U}}
\newcommand{\cV}{\mathcal{V}}
\newcommand{\mesh}{{\rm mesh}}
\begin{document}
\title[Distality and expansivity for group actions]{The non-coexistence of distality and expansivity for group actions on infinite compacta}

\author[B.~Liang]{Bingbing Liang}
\address{Soochow University, Suzhou, Jiangsu 215006, China}
\email{bbliang@suda.edu.cn}

\author[E.~Shi]{Enhui Shi}
\address{Soochow University, Suzhou, Jiangsu 215006, China}
\email{ehshi@suda.edu.cn}

\author[Z.~Xie]{Zhiwen Xie}
\address{Soochow University, Suzhou, Jiangsu 215006, China}
\email{20204007002@stu.suda.edu.cn}

\author[H.~Xu]{Hui Xu}
\address{CAS Wu Wen-Tsun Key Laboratory of Mathematics, University of Science and
Technology of China, Hefei, Anhui 230026, China}
\email{huixu2734@ustc.edu.cn}

\subjclass[2020]{Primary 37B05, 37B02.}
\keywords{distal, expansive, minimal, equicontinuous, Furstenberg's structure theorem}

\date{September, 2021}

\begin{abstract}
Let $X$ be a compact metric space and  $G$  a finitely generated group. Suppose $\phi:G\rightarrow {\rm Homeo}(X)$ is a continuous action.
We show that if $\phi$ is both distal and expansive, then $X$ must be finite. A counterexample is constructed to show the necessity of finite generation
condition on $G$. This is also a supplement to a result due to Auslander-Glasner-Weiss which says that every distal  action by a finitely generated
group on a zero-dimensional compactum is equicontinuous.

\end{abstract}

\maketitle

\section{Introduction}

Expansivity and distality are two important notions in the theory of dynamical system. The notion of distality was introduced by Hilbert for a better understanding of equicontinuity \cite{E58}. Later,  Furstenberg  described completely the relations between distality and equicontinuity for minimal group actions, which is nowadays known as the Furstenberg's
structure theorem \cite{Fur63}. Readers may refer to \cite{S20} and  \cite{Sh20} for some recent studies around the topological and algebraic aspects of minimal distal systems. The notion of expansivity is closely related to the theory of structure stability in differential dynamical systems, which is also a kind of chaotic property
possessed by many natural systems such as the Anosov systems and symbolic systems. One may consult \cite{A89} for a systematic introduction to this property.
\medskip

Now let us first recall the definitions of these two notions. By a {\it compactum} we mean a compact metric space.  Let $(X, G)$ be a continuous action of a group $G$ on a compactum $X$ with the metric $d$. If $x, y\in X$ are such that $\inf_{g\in G}d(gx, gy)=0$, then $x$ and $y$ are said to be {\it proximal}; if $x=y$, then the proximal
pair $\{x, y\}$ are said to be {\it trivial}. If $(X, G)$ contains no nontrivial proximal pair, then it is said to be {\it distal}; if there is a constant $c>0$ such that
for every $x\not=y\in X$, $\sup_{g\in G}d(x, y)>c$, then $(X, G)$ is said to be {\it expansive}.  Such a constant $c$ is called an {\it expansive constant} of $(X, G)$. Intuitively, these two notions describe two kinds of
dynamical phenomena which lie on separately two opposite ends of complexity. This is indeed what happens for $G=\mZ$ as the following well-known
result indicates: no infinite compact metric space admits an expansive and distal $\mZ$-action  \cite[Propsition 2.7.1]{BS02}.
\medskip

The aim of the paper is to study the coexistence of distality and expansivity for general group actions. The study of expansive systems for general group actions attracts researchers' constant attention.
A classical result due to Ma\~n\'e shows that if a compactum $X$ admits an
expansive $\mathbb Z$-action, then $X$ is of finite dimension \cite{M79}.
Unfortunately, this result fails for $\mZ^2$-actions. Shi and Zhou constructed an expansive $\mathbb Z^2$-action on an infinite
dimensional compactum \cite{SZ05}. However, Meyerovitch and Tsukamoto shows that for an expansive $\mZ^k$-action the mean dimension of $\mZ^{k-1}$-subgroup action is finite, which provides a proper generalization of Ma\~n\'e's result \cite{MT19}. For Suslinian continua it is shown that there is no expansive action by groups of subexponential growth \cite{BSXX21}. This extends a result due to Kato for $\mathbb Z$-actions \cite{K90}. There is also a cumulative development on the expansive actions by continuous automorphisms on compact metrizable abelian groups appealing tools from commutative algebras, operator algebras, etc. \cite{LS99, CL15, Mey19, BGL20}.

\medskip

The main result of the paper is:

\begin{thm} \label{main result}
Let  $G$ be a finitely generated group and $X$ a compact metric space. If a continuous action $(X, G)$ is both distal and expansive, then $X$
is finite.
\end{thm}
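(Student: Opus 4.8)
The plan is to prove the stronger assertion that $(X,G)$ is equicontinuous; the theorem then follows from the elementary fact that an equicontinuous expansive action has finite phase space. Indeed, if $c$ is an expansive constant and $\delta>0$ is chosen so that $d(x,y)<\delta$ implies $d(gx,gy)<c$ for every $g\in G$ (possible by equicontinuity), then distinct points of $X$ are at distance $\ge\delta$, so $X$ is finite by compactness.

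To obtain equicontinuity I would first reduce to the case that $(X,G)$ is minimal. Since the action is distal, every orbit closure is minimal, so $X$ is the disjoint union of its minimal subsystems, and each of these is again distal and expansive (with the same expansive constant). Granting the minimal case, each minimal subsystem is finite; one then shows, by a compactness argument using the expansive constant together with the finite generation of $G$, that only finitely many minimal sets occur, whence $X$ itself is finite.

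For the minimal case let $(M,G)$ be minimal, distal and expansive, and apply Furstenberg's structure theorem: $M=\varprojlim_{\xi\le\eta}M_\xi$ with $M_0$ a point, $M_{\xi+1}\to M_\xi$ an isometric extension, and $M_\lambda=\varprojlim_{\xi<\lambda}M_\xi$ at limit ordinals. The decisive point is that expansivity forces every isometric extension $\pi\colon M_{\xi+1}\to M_\xi$ in the tower to be finite-to-one. Such an extension carries a continuous, $G$-invariant fibre metric $\rho$ on $R_\pi=\{(x,y):\pi x=\pi y\}$, i.e.\ $\rho(gx,gy)=\rho(x,y)$ for all $g$; since $R_\pi$ is compact and $\rho$ and the restriction of $d$ both vanish exactly on the diagonal, they are uniformly comparable on $R_\pi$. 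Hence, for $x,y$ in a common fibre, $\sup_{g}d(gx,gy)$ is controlled by a function of $\rho(x,y)$ that tends to $0$ as $\rho(x,y)\to 0$; were $\rho(x,y)$ below the threshold determined by $c$, this would contradict expansivity. So distinct points of any fibre are uniformly $\rho$-separated, and each (compact) fibre is finite.

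Now induct transfinitely along the tower. The first extension $M_1\to M_0$ has finite fibres over a point, so $M_1$ is finite; a finite-to-one (isometric) extension of a finite system is finite, so successor stages remain finite; and a limit stage $M_\lambda$ is an inverse limit of finite systems, hence a zero-dimensional minimal distal system — at which point the theorem of Auslander--Glasner--Weiss (this is where finite generation of $G$ enters) shows $M_\lambda$ is equicontinuous, so finite by the first paragraph, and the induction proceeds. Therefore $M=M_\eta$ is finite, and the reduction above finishes the proof. The two places I expect to require real work are the uniform comparison of $d$ and $\rho$ in the extension step — it must hold with constants that do not degenerate as one moves up the tower — and the final compactness argument ruling out infinitely many minimal sets, which is precisely where finite generation is indispensable, as the announced counterexample shows that it fails for general $G$.
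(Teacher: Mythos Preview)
Your transfinite induction up the Furstenberg tower has a genuine gap: both the successor and the limit step tacitly require the intermediate level $M_{\xi+1}$ (resp.\ $M_\lambda$) to be expansive, whereas you only know this for $M=M_\eta$. In the successor step the comparison of the fibre metric $\rho$ with $d$ takes place on $M_{\xi+1}$, so ``$\sup_g d(gx,gy)$ small'' would contradict expansivity of $M_{\xi+1}$, not of $M$; at a limit ordinal, ``equicontinuous, so finite by the first paragraph'' again needs $M_\lambda$ to be expansive. Already the first step $\xi=1$ stalls: $M_1$ is the maximal equicontinuous factor of $M$, and nothing you have written rules out $M_1$ being, say, an infinite odometer. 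Since expansivity does \emph{not} in general pass to factors, this is a missing lemma, not bookkeeping, and the concern you flag about ``constants that do not degenerate as one moves up the tower'' is not the actual obstruction.

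The paper closes exactly this gap. Your fibre-metric argument is correct for the \emph{top} extension $M_\eta\to M_{\eta-1}$, yielding a finite-fold covering. The paper then proves---by a word-length argument that uses finite generation in an essential way---that expansivity passes down through finite-fold coverings. The induction is on the height $\eta$, not bottom-up along the tower: one first shows that $\eta$ cannot be a limit ordinal, because a strict inverse limit is never expansive; hence $\eta$ is a successor, $M_{\eta-1}$ is expansive, distal, minimal and of smaller height, so finite by induction, and $M_\eta$ is a finite cover of a finite set. Your reduction to the minimal case is likewise nontrivial: the ``compactness argument ruling out infinitely many minimal sets'' is a separate proposition in the paper (pointwise periodic expansive actions of finitely generated groups are finite), proved by its own word-length argument together with Auslander--Glasner--Weiss, and it does not follow from a one-line compactness observation.
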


Relying on the heavy machine of the Furstenberg's structure theorem, in Section 3, we finish the proof of this theorem.
\medskip

In Section 4 we give a counterexample to show that the finitely generating condition is indispensable. Specifically, we construct a
distal and expansive minimal action on the Cantor set by a non-finitely generated countable group. This is also a supplement
to a result due to Auslander-Glasner-Weiss  \cite[Corollary 1.9]{AGW07}, which says that if a zero-dimensional compactum $X$ admits a distal minimal action by
a finitely generated group, then the action is equicontinuous. Our example indicates that there does exist a non-equicontinous
distal action on a zero-dimensional compactum by a non-finitely-generated group.

\section{Prelimilaries}

Let $G$ be a countable discrete group with the identity $e_G$ and $X$  a compactum with metric $d$.  By a {\it continuous action} of  $G$ on $X$ or simply a {\it dynamical system}, denoted by  $(X, G)$, we mean a
group homomorphism $\phi \colon G\rightarrow {\rm Homeo}(X)$. For brevity, we usually use $sx$ to denote $\phi(s)(x)$ for $s\in G$ and $x\in X$. For $x\in X$, the set $Gx:=\{gx: g\in G\}$
is called the {\it orbit} of $x$ under the action of $G$. A continuous surjection $\pi \colon (X, G) \to (Y, G)$ between two actions is called a {\it factor map} or an
{\it extension} if  $\pi(gx)=g\pi(x)$ for every $g \in G$ and $x \in X$.
 A dynamical system $(X, G)$ is called {\it minimal} if $X$ has no proper nonempty $G$-invariant closed subsets; in other words, every orbit of $(G, X)$ is dense in $X$.  We say $(X, G)$  is {\it equicontinuous} if for any $\varepsilon > 0$ there exists $\delta > 0$ such that
$\sup_{g \in G} d(gx, gy) < \varepsilon$ for all $x, y\in X$ with $d(x, y) < \delta$. A factor map $\pi \colon (X, G) \to (Y, G)$ is called  {\it equicontinuous extension} if for any $\varepsilon > 0$ there exists $\delta > 0$ such that for all $x,x' \in X$ with $d(x, x') < \delta$ and $\pi(x)=\pi(x')$ we have  $\sup_{g \in G} d(gx, gx') < \varepsilon.$
Clearly equicontinuous systems are distal and cannot be expansive unless the system is finite.
\medskip

We collect some results which will be used in the sequel.
\medskip

Let $(X, d)$ be a compact metric space. Denote by $2^X$ the collection of nonempty closed subsets of $X$, which is called the {\it hyperspace over $X$}.  Under the Hausdorff metric $d_H$ induced from $d$ the hyperspace $2^X$ is a compact metric space \cite[Theorems 4.13]{N92}. Let $f \colon Z \to Z'$ be a continuous map between compact metric spaces. Note that the induced map $f^{-1} \colon Z' \to 2^{Z}$ sending $u$ to $f^{-1}(u)$ is always upper semi-continuous;  $f^{-1}$ is lower semi-continuous if and only if $f$ is an open map. In summary we have $f^{-1}$ continuous if and only if $f$ is an open map \cite[Appendix A.7]{Vri93}.

\begin{lem}\label{open map}\cite[Theorem 8.1]{Fur63}
 Let $\pi: (X,G) \rightarrow (Y,G)$ be a factor map of minimal distal systems. Then $\pi$ is open. In particular,  the map $\pi^{-1}: Y\rightarrow 2^X$ is continuous.
\end{lem}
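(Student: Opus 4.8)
The plan is to deduce that $\pi$ is open from lower semicontinuity of the fiber map $\pi^{-1}\colon Y\to 2^X$. As recalled just above, $\pi$ is open precisely when $\pi^{-1}$ is continuous, and $\pi^{-1}$ is automatically upper semicontinuous; so it is enough to show that whenever $y_n\to y$ in $Y$ and $x\in\pi^{-1}(y)$, there exist $x_n\in\pi^{-1}(y_n)$ with $x_n\to x$. The obstruction to beat is ``collapse'': a priori $\pi^{-1}(y_n)$ could converge in $2^X$ to a proper closed subset of $\pi^{-1}(y)$, and ruling this out is exactly where distality will be used.

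To bring distality to bear I would pass to the space $P(X)$ of Borel probability measures on $X$ with the weak$^*$ topology, on which $G$ acts by $g\mu=\phi(g)_*\mu$ and for which $\pi_*\colon P(X)\to P(Y)$ is a factor map. Put $W=\{\mu\in P(X):\pi_*\mu\text{ is a point mass}\}$; this is a closed $G$-invariant subset of $P(X)$, and $\lambda\colon W\to Y$ defined by $\pi_*\mu=\delta_{\lambda(\mu)}$ (equivalently $\lambda(\mu)=\pi(a)$ for any $a\in\supp\mu$) is a factor map. The key input is that, since $(X,G)$ is distal, its Ellis semigroup $E(X)$ is a group of homeomorphisms of $X$, and consequently $(P(X),G)$ --- hence the subsystem $(W,G)$ --- is again distal; in particular every $G$-orbit closure in $W$ is minimal, and in fact the $G$-orbit closure of a measure $\mu$ equals $\{p_*\mu:p\in E(X)\}$. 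Now fix any $x_0\in X$, set $y_0=\pi(x_0)$, choose $\nu_0\in P\bigl(\pi^{-1}(y_0)\bigr)$ with $\supp\nu_0=\pi^{-1}(y_0)$, and let $N=\overline{G\nu_0}=\{p_*\nu_0:p\in E(X)\}\subseteq W$; then $N$ is minimal and $\lambda(N)=\overline{Gy_0}=Y$. I claim that $\supp\nu=\pi^{-1}(\lambda(\nu))$ for every $\nu\in N$. Indeed any $\nu\in N$ equals $p_*\nu_0$ for some $p\in E(X)$; since $p$ is a homeomorphism of $X$, $\supp\nu=p\bigl(\supp\nu_0\bigr)=p\bigl(\pi^{-1}(y_0)\bigr)=\pi^{-1}\bigl(\theta(p)y_0\bigr)=\pi^{-1}(\lambda(\nu))$, where $\theta\colon E(X)\to E(Y)$ is the homomorphism induced by $\pi$. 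Granting this, the lemma follows quickly: given $y_n\to y$, choose $\nu_n\in N$ with $\lambda(\nu_n)=y_n$; pass to a subsequence with $\nu_n\to\nu$ in the compact metric space $N$, so that $\lambda(\nu)=y$; then $\supp\nu_n=\pi^{-1}(y_n)$ and $\supp\nu=\pi^{-1}(y)$ by the claim, and since $\mu\mapsto\supp\mu$ is lower semicontinuous for the weak$^*$ topology, $\pi^{-1}(y)=\supp\nu\subseteq\liminf_n\supp\nu_n=\liminf_n\pi^{-1}(y_n)$. A routine subsequence argument promotes this to lower semicontinuity of $\pi^{-1}$ at every point of $Y$, and the lemma is proved.

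The part demanding real care is the highlighted claim that the orbit closure $N$ coincides with the $E(X)$-orbit of $\nu_0$ and that $E(X)$ acts through homeomorphisms --- equivalently, that $(P(X),G)$ inherits distality from $(X,G)$. The delicate point is that the enveloping semigroup of a distal metric flow need not be metrizable, so one cannot simply pass a pointwise limit of group elements through the integral defining the pushforward; handling this --- e.g.\ by realizing the relevant limits along sequences inside the metrizable orbit closure $N$ --- is the technical heart of the argument. Everything else is soft: upper semicontinuity of $\pi^{-1}$, continuity of $\lambda$, lower semicontinuity of support, minimality of orbit closures in distal systems, and the bookkeeping with $\theta$.
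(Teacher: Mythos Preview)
The paper does not supply a proof of this lemma; it is quoted from Furstenberg's 1963 paper as a black box. So there is no ``paper's own proof'' to compare against, and I can only comment on your argument on its merits.

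Your overall strategy --- passing to measures supported on fibres, taking an orbit closure $N$ in $P(X)$, and exploiting lower semicontinuity of support --- is very much in the spirit of Furstenberg's original argument. The genuine gap is exactly where you locate it, but you have not closed it, and one of your intermediate claims is false as stated. For a distal metric system the Ellis semigroup $E(X)$ is a group of \emph{bijections}, not of homeomorphisms; continuity of the elements of $E(X)$ characterises equicontinuity, not distality. Because of this, the step ``$\supp(p_*\nu_0)=p(\supp\nu_0)$'' is not automatic: $p^{-1}$ of a closed set need not be closed, so a full-support measure $\nu_0$ can push forward to a measure whose support is strictly smaller than the (closed) image fibre. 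More seriously, the identification $N=\{p_*\nu_0:p\in E(X)\}$ is not justified: pointwise convergence $g_\alpha\to p$ in $X^X$ does not imply weak$^*$ convergence $(g_\alpha)_*\nu_0\to p_*\nu_0$, and your suggested fix via metrizability of $N$ does not help, since it is $E(X)$ (or $E(P(X))$), not $N$, whose non-metrizability is the obstruction.

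What actually makes the argument go through is the separate, nontrivial fact that $(P(X),G)$ (equivalently $(2^X,G)$) is distal whenever $(X,G)$ is; this is due to Furstenberg and Glasner and is not a formal consequence of $E(X)$ being a group. Granting it, one works directly in $2^X$: the orbit closure $M=\overline{G\cdot\pi^{-1}(y_0)}\subseteq 2^X$ is minimal, and one shows that the $G$-equivariant map $\sigma\colon M\to Y$, $A\mapsto\pi(A)$, is an isomorphism with inverse $y\mapsto\pi^{-1}(y)$ by comparing $M$ with the minimal set $\overline{G\cdot\{x_0\}}\cong X$ inside $2^X$ and using that elements of $M$ cover $X$. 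Your outline has the right shape; the missing piece is a proof (or citation) of distality of the hyperspace action, and the Ellis-group computation should be rerouted accordingly.
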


\medskip

Let $\xi$ be an ordinal. A {\it projective system} is a collection   of dynamical systems $\{(X_\lambda, G)\}_{\lambda\leq\xi}$ together with a collection of factor maps
 $\{\pi_{\alpha,\beta}\colon X_\alpha\rightarrow X_\beta\}_{\beta<\alpha\leq\xi}$ satisfying $\pi_{\alpha,\gamma}=\pi_{\beta,\gamma}\circ \pi_{\alpha,\beta}$ for any
 $\gamma<\beta<\alpha$; the {\it projective limit} of this projective system, denoted by $\underset{\longleftarrow}{\lim}(X_{\lambda}, G)_{\lambda\leq\xi}$, consists of the closed subset
 $X:=\{(x_\lambda)\in \prod_{\lambda\leq\xi}X_{\lambda}: \pi_{\alpha,\beta}(x_\alpha)=x_\beta, \forall \ \alpha<\beta\leq\xi\}$ of $\prod_{\lambda\leq\xi}X_{\lambda}$
 and the diagonal action of $G$ on $X$ defined by $g(x_\lambda)=(gx_\lambda)$.
\medskip

Furstenberg established the following structure theorem of minimal distal systems \cite[Theorem 2.3]{Fur63}.
\begin{thm} \label{distal structure}
For any minimal distal system $(X,G)$, there exists a countable ordinal $\eta$ and to each ordinal $\alpha\leq \eta$ there is an associated minimal factor system $(X_{\alpha},G)$ such that
\begin{itemize}
\item [(i)] $(X_1, G)$ is the maximal equicontinuous factor of $(X,G)$ where $1$ is the least ordinal, and $(X_{\eta}, G)=(X,G)$;
\item [(ii)] for any $\beta<\alpha\leq \eta$, there is a nontrivial extension $\pi_{\alpha,\beta}\colon X_{\alpha}\rightarrow X_{\beta}$ such that for any $\gamma<\beta$, we always have
$\pi_{\alpha,\gamma}=\pi_{\beta,\gamma}\circ \pi_{\alpha,\beta}$;
\item [(iii)] for any successor ordinal $\alpha\leq \eta$ , $\pi_{\alpha,\alpha-1}\colon X_{\alpha}\rightarrow X_{\alpha-1}$ is an equicontinuous extension;
\item [(iv)] for any limit ordinal $\xi$,  $(X_{\xi},G)=\underset{\longleftarrow}{\lim}(X_{\alpha}, G)_{\alpha< \xi}$.
\end{itemize}

\end{thm}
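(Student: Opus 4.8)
The plan is to build the tower by transfinite recursion, peeling off at each successor step a maximal equicontinuous extension, and to reduce the entire theorem to a single structural proposition about distal extensions. Concretely, suppose the minimal factor $(X_\alpha,G)$ together with the factor map $p_\alpha\colon X\to X_\alpha$ has already been produced. If $p_\alpha$ is a homeomorphism we stop and set $\eta=\alpha$; otherwise we must insert a strictly larger minimal factor $(X_{\alpha+1},G)$, sitting between $X$ and $X_\alpha$, so that $\pi_{\alpha+1,\alpha}\colon X_{\alpha+1}\to X_\alpha$ is a nontrivial equicontinuous extension. At a limit ordinal $\xi$ clause (iv) forces the choice $(X_\xi,G)=\underset{\longleftarrow}{\lim}(X_\alpha,G)_{\alpha<\xi}$, which is again minimal and distal since both properties pass to projective limits. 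The base $(X_1,G)$ is the maximal equicontinuous factor, realized as the quotient $X/\RP$ by the closed $G$-invariant regionally proximal relation $\RP$; for a minimal distal system one checks that $\RP$ is already an equivalence relation and that $X/\RP$ is genuinely equicontinuous and maximal among equicontinuous factors.

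The analytic core, and the main obstacle, is the following proposition: if $\pi\colon(X,G)\to(Y,G)$ is a factor map of minimal distal systems that is not a homeomorphism, then there is an intermediate minimal factor $(Z,G)$ with $X\to Z\to Y$ such that $Z\to Y$ is a nontrivial equicontinuous extension. To attack it I would invoke the enveloping (Ellis) semigroup $E(X)=\overline{\{x\mapsto gx: g\in G\}}$, the closure taken in $X^{X}$ with the product topology. The decisive input is Ellis's theorem that distality forces $E(X)$ to be a group (every element is a bijection and every idempotent is the identity), which makes the fibers of $\pi$ rigid and symmetric. Combined with Lemma \ref{open map}, which gives that $\pi$ is open and $\pi^{-1}\colon Y\to 2^X$ continuous, this ensures that the fibered product $R_\pi=\{(x,x')\colon \pi(x)=\pi(x')\}$ varies continuously and compactly over $Y$.

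To extract the equicontinuous extension I would pass to the relative regionally proximal relation $\RP_\pi\subseteq R_\pi$, consisting of the pairs $(x,x')$ with $\pi(x)=\pi(x')$ that are regionally proximal over $Y$: there exist nets $(g_i)$ in $G$ and points $x_i\to x$, $x'_i\to x'$ with $\pi(x_i)=\pi(x'_i)$ and $d(g_ix_i,g_ix'_i)\to 0$. Using the group structure of $E(X)$ together with Lemma \ref{open map}, one shows that $\RP_\pi$ is a closed $G$-invariant equivalence relation and that $Z:=X/\RP_\pi$ is a minimal factor between $X$ and $Y$ for which $Z\to Y$ is equicontinuous; equivalently, one manufactures a continuous $G$-invariant fiber metric $\rho$ by averaging a compatible metric $d$ over the compact structure group supplied by $E(X)$, so that $\rho(gx,gx')=\rho(x,x')$ and $\rho$ restricts to a metric on each fiber, which is exactly the data of an equicontinuous (isometric) extension. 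The extension $Z\to Y$ is trivial only when $\RP_\pi=R_\pi$, so the whole proposition reduces to proving $\RP_\pi\ne R_\pi$ whenever $\pi$ is nontrivial, namely that a nontrivial distal extension can never be ``everywhere regionally proximal over its base.'' This nontriviality is where distality is used in an essential and delicate way, and it is the step I expect to be genuinely hard; it is precisely the point that fails for non-distal extensions.

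Finally I would verify termination and assemble the data. The relations $R_{p_\alpha}=\{(x,x')\colon p_\alpha(x)=p_\alpha(x')\}\subseteq X\times X$ form a strictly decreasing transfinite chain of closed sets, since each successor step refines the factor, and at limits $R_{p_\xi}=\bigcap_{\alpha<\xi}R_{p_\alpha}$ by clause (iv). Because $X\times X$ is compact metric, hence second countable, any strictly decreasing chain of closed subsets is countable—each strict drop is witnessed by a distinct member of a fixed countable base—so the recursion halts at a countable ordinal $\eta$ with $p_\eta$ a homeomorphism, i.e. $(X_\eta,G)=(X,G)$. The compatibility relations $\pi_{\alpha,\gamma}=\pi_{\beta,\gamma}\circ\pi_{\alpha,\beta}$ then follow by construction, giving (i)–(iv).
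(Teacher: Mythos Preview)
The paper does not prove this theorem at all: it is quoted as Furstenberg's structure theorem with a citation to \cite[Theorem 2.3]{Fur63}, and no argument is given. So there is no in-paper proof to compare against.

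That said, your sketch is the standard route to Furstenberg's theorem as it is usually presented today: set up the tower by transfinite recursion, take the maximal equicontinuous factor at the base, at each successor step quotient by the relative regionally proximal relation $\RP_\pi$ to obtain the maximal equicontinuous extension of the previous stage, take inverse limits at limit ordinals, and terminate by observing that the fiber relations $R_{p_\alpha}\subseteq X\times X$ form a strictly decreasing chain of closed sets in a second-countable space. You have also correctly isolated the genuine analytic content, namely that for a nontrivial extension $\pi$ of minimal distal systems one has $\RP_\pi\ne R_\pi$, and that this is where the Ellis-group structure of $E(X)$ does the real work. This is faithful to Furstenberg's original argument (and its later streamlinings), so your proposal is in line with the source the paper cites, even though the paper itself supplies no proof.
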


We call the ordinal $\eta$ appeared in the above theorem the {\it height} of $(X, G)$. In general, a dynamical system can not be partitioned as the disjoint union of minimal subsets. For distal systems, we do  have such a decomposition.
\begin{lem}\label{min sets}\cite[Theorem 3.2]{Fur63}
Any distal system is a disjoint union of minimal subsets.
\end{lem}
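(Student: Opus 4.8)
The plan is to reduce the statement to the single assertion that \emph{every point of $X$ lies in some minimal subset}, and then to establish this using the enveloping (Ellis) semigroup. First observe that two minimal subsets of any system are either equal or disjoint: if $M_1,M_2$ are minimal with $M_1\cap M_2\neq\emptyset$, then $M_1\cap M_2$ is a nonempty closed $G$-invariant subset of each, so $M_1=M_1\cap M_2=M_2$ by minimality. Hence, once we know each $x\in X$ belongs to a minimal set, the collection $\mathcal M$ of minimal subsets automatically partitions $X$, which is exactly the desired disjoint-union decomposition.

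To show every point lies in a minimal set, I would bring in the enveloping semigroup $E(X)$, namely the closure in $X^X$ (with the product topology) of the set of transformations $\{x\mapsto gx : g\in G\}$. Then $E(X)$ is a compact right-topological semigroup under composition, the evaluation $p\mapsto p(x)$ is continuous for each fixed $x$, and consequently $E(X)x$ is closed and coincides with $\overline{Gx}$. A key reformulation is that two points $a,b\in X$ are proximal precisely when $p(a)=p(b)$ for some $p\in E(X)$: a net $(g_i)$ with $d(g_ia,g_ib)\to 0$ has a subnet converging to some $p\in E(X)$, forcing $p(a)=p(b)$, and the converse is immediate. Now fix $x\in X$. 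By the Ellis--Numakura lemma $E(X)$ contains a minimal left ideal $L$, and $L$ contains an idempotent $u$ (so $u\circ u=u$). One checks in the standard way that $M:=Lx$ is a minimal subset of $\overline{Gx}$: it is closed and $G$-invariant, and for any $z=q(x)$ with $q\in L$ the set $E(X)q$ is a closed left ideal contained in $L$, hence equal to $L$, so that $\overline{Gz}=E(X)z=Lx=M$.

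The argument then closes in a single line using distality. The point $y:=u(x)$ lies in $M=Lx$, and since $u$ is idempotent we have $u(y)=u(u(x))=u(x)=y$, i.e.\ $u(x)=u(y)$; by the proximality criterion $x$ and $y$ are therefore proximal. As $(X,G)$ is distal, its only proximal pairs are trivial, so $x=y\in M$, and $x$ lies in a minimal set. Since $x$ was arbitrary, every point belongs to a minimal subset, and combined with the disjoint-or-equal dichotomy this yields the decomposition. The main obstacle is not distality itself, which enters only at the very last step, but assembling the enveloping-semigroup apparatus: the existence of a minimal left ideal and of an idempotent inside it, together with the verifications that $E(X)x=\overline{Gx}$, that $Lx$ is minimal, and that proximality is detected by a coincidence $p(x)=p(y)$ for some $p\in E(X)$. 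Once these standard facts are in place, distality collapses the proximal pair $\{x,u(x)\}$ and pins $x$ inside a minimal set.
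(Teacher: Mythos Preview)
Your argument is correct and is the standard modern proof via the Ellis enveloping semigroup. The paper itself does not supply a proof of this lemma: it simply quotes it as \cite[Theorem~3.2]{Fur63}, so there is no in-house argument to compare against. Furstenberg's original 1963 proof predates the enveloping-semigroup formalism and is phrased in terms of almost-periodic points and the closure of the acting group in $X^X$, but the underlying idea --- produce an idempotent limit of group elements and use distality to force $x=u(x)$ --- is the same as yours. Your write-up packages this cleanly; nothing is missing.
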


\begin{thm}\label{AGW07}\cite[Corollary 1.9]{AGW07}
Let $G$ be a finitely generated group and  $(X,G)$ a distal system. If $X$ is zero-dimensional, then $(X,G)$ is equicontinuous.
\end{thm}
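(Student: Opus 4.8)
The plan is to prove the minimal case first---which is the substance of the statement and the form in which it is applied---and then reduce the general distal case to it. So suppose $(X,G)$ is minimal and distal with $X$ zero-dimensional and $G=\langle s_1,\dots,s_d\rangle$ finitely generated, and apply Furstenberg's structure theorem (Theorem \ref{distal structure}) to obtain the tower $(X_\alpha,G)_{\alpha\le\eta}$ with $X_\eta=X$. The goal is to show that the height $\eta$ equals $1$, for then $X=X_1$ is its own maximal equicontinuous factor and we are done; I would argue this by transfinite induction on $\eta$. The first technical point is that zero-dimensionality descends through the tower. For a successor step $\pi_{\alpha,\alpha-1}\colon X_\alpha\to X_{\alpha-1}$ this is an \emph{equicontinuous} extension of minimal distal systems, hence open by Lemma \ref{open map}; combined with the uniform (isometric) structure of such an extension one expects that $X_\alpha$ is locally a product of $X_{\alpha-1}$ with a fixed fibre, so that $\dim X_{\alpha-1}\le\dim X_\alpha$ and zero-dimensionality passes down, while at a limit ordinal $X_\xi=\varprojlim_{\alpha<\xi}X_\alpha$ the same openness transmits it to each coordinate factor. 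Establishing this local-product (structure-group) description of a purely \emph{topological} equicontinuous extension rigorously is the first real obstacle, and it is exactly where Lemma \ref{open map} enters.

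With zero-dimensionality in hand, the induction reduces everything to a single nontrivial equicontinuous extension $\pi\colon X\to Z$ of minimal distal systems in which $Z$ is a zero-dimensional \emph{equicontinuous} minimal system---hence an odometer, i.e.\ an inverse limit $Z=\varprojlim G/G_n$ of finite transitive $G$-sets---and $X$ is zero-dimensional. I would then represent the extension by a structure cocycle: the fibre is a zero-dimensional homogeneous space of a compact group, hence profinite, and the $G$-action over $Z$ is governed by a continuous cocycle $\sigma\colon G\times Z\to K$ into a profinite structure group $K$. Because $Z$ is zero-dimensional, each $\sigma(g,\cdot)$ is continuous into the profinite $K$, so modulo any open subgroup $K_m$ it is locally constant, i.e.\ it factors through some finite level $Z\to G/G_n$ followed by a map into the finite quotient $K/K_m$.

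The crux---and the only place the hypothesis is used---is the descent made possible by finite generation. By the cocycle identity $\sigma$ is determined by its values on the generators $s_1,\dots,s_d$. Each $\sigma(s_i,\cdot)\bmod K_m$ is locally constant, hence constant on the cells of some finite level $G/G_{n_i}$; since there are only \emph{finitely many} generators we may put $n=\max_i n_i<\infty$, and at this level all generator-cocycles, and therefore (again by the cocycle identity) the whole cocycle modulo $K_m$, descend. Consequently the level-$(n,m)$ data define a genuine \emph{finite} $G$-system $(G/G_n\times K/K_m)/{\sim}$, and letting $n,m\to\infty$ exhibits $X=\varprojlim(\text{finite }G\text{-systems})$. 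An inverse limit of finite $G$-systems is equicontinuous, so $X$ is equicontinuous, forcing $\pi$ to be trivial and the height to collapse to $1$, the desired contradiction. If $G$ were not finitely generated the levels $n_i$ could be unbounded and this descent would fail, which is precisely the room exploited by the counterexample of Section 4.

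Finally, for the general (non-minimal) distal case I would pass to minimal subsystems via Lemma \ref{min sets}: each minimal subset is minimal distal and zero-dimensional, hence equicontinuous by the above. The remaining, genuinely delicate, point is to upgrade equicontinuity of each minimal piece to \emph{uniform} equicontinuity of all of $X$; since the decomposition into minimal sets need not be by clopen pieces this does not follow formally, and it is the second main obstacle. I expect to handle it by running the clopen-partition/inverse-limit description directly on $X$---showing that every finite clopen partition of $X$ refines to a $G$-invariant one by the same finite-generation descent carried out over the maximal equicontinuous factor of $X$---rather than piecing the minimal subsystems together. Overall, the two hard steps are the topological structure-group description that yields both the zero-dimensional descent and the cocycle representation, and this non-minimal uniformity; the finite-generation step itself, though it is the heart of the matter, is then short.
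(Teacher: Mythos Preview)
The paper does not contain its own proof of this theorem; it is quoted from \cite[Corollary~1.9]{AGW07} and used as a black box (in Proposition~\ref{same period}). There is therefore no in-paper argument to compare your proposal against.

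On the substance of your sketch: the first ``obstacle'' you flag---descent of zero-dimensionality through the tower---is in fact not an obstacle, and you are working harder than necessary. By Lemma~\ref{open map} each factor map in the tower is open, and a continuous open surjection from a zero-dimensional compactum onto a compactum automatically has zero-dimensional image: the image of every clopen set is open (by openness) and compact, hence clopen, and these images form a basis. No local-product or structure-group description is needed for this step. With that in hand, your cocycle argument for the minimal case is essentially correct in outline: the fibre of an equicontinuous extension is a zero-dimensional homogeneous space of a compact group, which forces the structure group to be profinite, and finite generation of $G$ lets you descend the generator cocycles modulo each open normal subgroup to a common finite level of the odometer base, exhibiting $X$ as an inverse limit of finite $G$-systems and hence as equicontinuous. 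This is indeed exactly the mechanism that fails in the example of Section~4.

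The genuine gap is the second one you identify, the non-minimal case. Equicontinuity of each minimal piece does not formally imply equicontinuity of $(X,G)$, and the minimal decomposition of a distal system is typically not into clopen pieces, so Lemma~\ref{min sets} alone does not finish. Your suggested fix---running a clopen-refinement argument directly on $X$---points in the right direction, but it is not carried out, so at that stage the proposal is a plan rather than a proof.
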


\section{Proof the main theorem}

To prove the main theorem, we first make some preparations.

\begin{lem} \label{cover map}
Let $\pi \colon (X,G)\rightarrow (Y,G)$ be an equicontinuous extension between minimal distal systems. If $(X,G)$ is expansive, then there is $N\in\mathbb{N}$ such that $|\pi^{-1}(y)| = N$ for any $y\in Y$
and $\pi$ is a covering map.
\end{lem}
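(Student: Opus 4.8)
The plan is to play off two facts against each other: expansivity forces the fibers of $\pi$ to be \emph{uniformly} separated, while Lemma~\ref{open map} makes the fiber assignment $\pi^{-1}\colon Y\to 2^X$ continuous in the Hausdorff metric. First I would fix an expansive constant $c>0$ for $(X,G)$ and invoke the definition of equicontinuous extension with $\varepsilon=c$: this produces $\delta>0$ such that whenever $\pi(x)=\pi(x')$ and $d(x,x')<\delta$ one has $\sup_{g\in G}d(gx,gx')<c$. Since expansivity gives $\sup_{g\in G}d(gx,gx')>c$ for every $x\neq x'$, we conclude that $x=x'$. Hence any two distinct points of a single fiber $\pi^{-1}(y)$ are at distance at least $\delta$; by compactness of $X$ each fiber is finite, with cardinality bounded above by the maximal size $M$ of a $\delta$-separated subset of $X$. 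Write $N(y):=|\pi^{-1}(y)|\in\{1,\dots,M\}$.

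Next I would show $y\mapsto N(y)$ is locally constant. Fix $y_0$ and write $\pi^{-1}(y_0)=\{x_1,\dots,x_k\}$ with $d(x_i,x_j)\geq\delta$ for $i\neq j$; let $B_i$ be the open $\delta/2$-ball about $x_i$, so the $B_i$ are pairwise disjoint. By Lemma~\ref{open map} the map $\pi^{-1}\colon Y\to 2^X$ is continuous, so there is an open neighborhood $V$ of $y_0$ with $d_H(\pi^{-1}(y),\pi^{-1}(y_0))<\delta/2$ for all $y\in V$. This forces $\pi^{-1}(y)\subseteq\bigcup_i B_i$ and $\pi^{-1}(y)\cap B_i\neq\emptyset$ for each $i$; moreover, two points of $\pi^{-1}(y)$ lying in a common $B_i$ are at distance $<\delta$, hence equal, so each $B_i$ contains exactly one point of $\pi^{-1}(y)$. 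Therefore $N(y)=k=N(y_0)$ on $V$, i.e.\ $N$ is continuous with values in a discrete set. Since $\pi(gx)=g\pi(x)$ yields $\pi^{-1}(gy)=g\,\pi^{-1}(y)$ and each $g$ is a homeomorphism, $N$ is $G$-invariant; minimality of $(Y,G)$ then forces $N\equiv N$ constant.

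Finally, to see $\pi$ is a covering map I would reuse the neighborhood $V$ and the balls $B_1,\dots,B_N$ from the previous paragraph. Put $U_i:=\pi^{-1}(V)\cap B_i$. These are open in $X$, pairwise disjoint, and cover $\pi^{-1}(V)$ (each fiber over a point of $V$ meets each $B_i$ in exactly one point and is contained in $\bigcup_i B_i$), and $\pi$ maps each $U_i$ bijectively onto $V$. Since $\pi$ is open by Lemma~\ref{open map}, its restriction to the open set $U_i$ is an open map onto $V$, hence $\pi|_{U_i}\colon U_i\to V$ is a continuous open bijection, i.e.\ a homeomorphism. Thus $\pi$ is an $N$-to-$1$ covering map.

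The step I expect to be the main obstacle is establishing the local constancy of the fiber cardinality — precisely, coupling the uniform lower bound $\delta$ on intra-fiber distances with the Hausdorff continuity of $\pi^{-1}$ to pin each nearby fiber down to exactly one point per ball. Once that is in place, the passage to a globally constant value via $G$-invariance and minimality, and the extraction of the covering structure from openness of $\pi$, are routine.
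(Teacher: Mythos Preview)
Your proof is correct and follows essentially the same approach as the paper: both arguments combine the uniform fiber separation (from equicontinuity of the extension plus expansivity) with the Hausdorff continuity of $\pi^{-1}$ from Lemma~\ref{open map}, and then invoke minimality and openness of $\pi$. The only cosmetic difference is that the paper compares $|\pi^{-1}(y_1)|$ and $|\pi^{-1}(y_2)|$ directly by sending an orbit of $y_1$ toward $y_2$ and using that cardinality can only drop under Hausdorff limits, whereas you first establish local constancy of $N(y)$ and then use $G$-invariance plus density of orbits; these are equivalent packagings of the same idea.
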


\begin{proof}
Let $c$ be an expansive constant for $(X,G)$.  Since the extension $\pi$ is equicontinuous, we have $\inf\{d(x,x'):\pi(x)=\pi(x'),x\neq x'\}>0$. Thus for any $y\in Y$, by the compactness of $\pi^{-1}(y)$,
the cardinality $|\pi^{-1}(y)| <\infty$.

Since $(Y,G)$ is minimal, for any $y_1, y_2\in Y$, there exists a sequence $g_n$'s of $G$ approaching to $y_2$.
By Lemma \ref{open map}, $\pi^{-1}(g_ny_1)$ approaches to $\pi^{-1}(y_2)$ in $2^X$ under the Hausdorff metric. Thus $|\pi^{-1}(y_1)|\geq |\pi^{-1}(y_2)|$.
By symmetry, we have $|\pi^{-1}(y_1)|\leq |\pi^{-1}(y_2)|$ and hence $|\pi^{-1}(y_1)|= |\pi^{-1}(y_2)|$. This implies that, for every
$x\in X$, there is an open neighborhood $U$ of $x$ such that the restriction $\pi|_U: U\rightarrow Y$ is injective. Since $\pi$ is open
by Lemma \ref{open map}, $\pi|_U$ is a homeomorphism from $U$ to $\pi(U)$. Thus $\pi$ is a covering map.
\end{proof}

Recall that a covering map $\pi \colon X\rightarrow Y$ between two compact metric spaces is said to be of {\it finite-fold} if $\sup_{y \in Y} |\pi^{-1}(y)|< \infty$.
By compactness, we have

\begin{lem}\label{lower bound}
If $\pi: X\rightarrow Y$ is a finite-fold covering between two compacta, then $\inf\{d(x,x'): x\not=x'\in X,\pi(x)=\pi(x')\} > 0$.
\end{lem}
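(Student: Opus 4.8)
The plan is to argue by contradiction using the compactness of $X$ and the defining local-homeomorphism property of a covering map. Suppose the infimum were $0$. Then there exist sequences $x_n \neq x_n'$ in $X$ with $\pi(x_n) = \pi(x_n')$ and $d(x_n, x_n') \to 0$. By compactness of $X$ we may pass to a subsequence so that $x_n \to x$ and $x_n' \to x'$; since $d(x_n, x_n') \to 0$ we get $x' = x$, and by continuity of $\pi$ we have $\pi(x) = \lim \pi(x_n) = \lim \pi(x_n') = \pi(x)$, so both sequences converge to the same point $x$ lying in the fiber over $y := \pi(x)$.

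Now invoke the covering structure at $x$: there is an open neighbourhood $U$ of $x$ such that $\pi|_U \colon U \to \pi(U)$ is a homeomorphism (in particular injective). For $n$ large enough, both $x_n$ and $x_n'$ lie in $U$, and $\pi(x_n) = \pi(x_n')$ forces $x_n = x_n'$ by injectivity of $\pi|_U$, contradicting $x_n \neq x_n'$. Hence the infimum is strictly positive. (The finite-fold hypothesis is not even needed for this direction; it is presumably recorded here because the covering maps produced in Lemma \ref{cover map} are automatically finite-fold, and it is the combination that gets used later.)

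The argument is essentially routine; the only point requiring a little care is making sure the local injectivity neighbourhood $U$ captures \emph{both} sequences simultaneously for large $n$, which is immediate once we know $x_n \to x$ and $x_n' \to x$. I do not anticipate a genuine obstacle here — the content is entirely the standard compactness-plus-local-homeomorphism packaging, and an alternative phrasing via the continuous map $x \mapsto \inf\{d(x,x') : x' \in \pi^{-1}(\pi(x)) \setminus \{x\}\}$ (using that this fiber-diameter function is positive and, by a short argument, lower semicontinuous on the compact space $X$) would work equally well.
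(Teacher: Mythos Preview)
Your argument is correct and is exactly the standard compactness argument the paper has in mind; the paper itself offers no proof beyond the single phrase ``By compactness, we have'' preceding the lemma statement, so your write-up is a faithful (and fully detailed) unpacking of that hint. Your observation that the finite-fold hypothesis is not actually used is also accurate.
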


Let $X$ be a metric space with metric $d$ and $A\subset X$. If there is some $c>0$ such that $d(x, y)>c$ for any $x\not=y\in A$, then $A$ is called
{\it $(c, d)$-separated}. We use the symbol ${\rm diam}(A, d)$ to denote the diameter of $A$ with respect to the metric $d$. If
$\mathcal U$ is a family of subsets of $X$, then ${\rm mesh}({\mathcal U}, d):=\sup\{{\rm diam}(A, d): A\in \mathcal U\}$. If $G$ is a group
with a finite generating set $S$, then the {\it word length} of $g\in G$ is the minimal $n$ such that $g$ can be expressed as $g=s_1s_2\cdots s_n$ where
$s_i\in S$ or $s_i^{-1}\in S$.
\medskip

The following lemma is a generalization of \cite[Theorem 2.23]{A89}. The finitely generating condition  is indispensable as the example will illustrate in Section 4.

\begin{lem}\label{cover}
Let $G$ be a finitely generated group and  $\pi: (X,G)\rightarrow (Y,G)$  a factor map. If $\pi$ is a finite-fold covering and $(X,G)$ is expansive, then $(Y, G)$ is  expansive.
\end{lem}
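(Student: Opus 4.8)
The plan is to produce an explicit expansive constant for $(Y,G)$ out of the covering structure and expansivity of $(X,G)$, by lifting a hypothetical non-separated pair in $Y$ to an honest non-separated pair in $X$; finite generation is used exactly to make this lift globally consistent.

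First I would fix the relevant moduli. Let $c>0$ be an expansive constant for $(X,G)$; by Lemma~\ref{lower bound} the quantity $\lambda:=\inf\{d(x,x'): x\neq x',\ \pi(x)=\pi(x')\}$ is positive, and after shrinking $c$ we may assume $c<\lambda$. Fix a finite symmetric generating set $S$ of $G$. Since $S$ is finite, uniform continuity of the homeomorphisms $s\in S$ yields $\delta\in(0,\min\{c,\lambda/2\})$ with the property that $d(q,q')<\delta$ implies $d(sq,sq')<\lambda/2$ for every $s\in S$. Because $\pi$ is a finite-fold covering of compacta, a standard compactness argument gives $r>0$ such that for every $y\in Y$ the set $\pi^{-1}(\overline{B(y,r)})$ is a disjoint union of finitely many ``sheets'', each mapped homeomorphically onto $\overline{B(y,r)}$ by $\pi$; shrinking $r$ if necessary, we may assume every sheet has diameter less than $\delta$. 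One then checks, using Lemma~\ref{lower bound}, that any two distinct sheets over the same ball $\overline{B(y,r)}$ are more than $\lambda/2$ apart: a point of one sheet has a $\pi$-partner in the other sheet, these partners are $\geq\lambda$ apart, and each sheet has diameter $<\delta<\lambda/2$.

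Next I would show that $d(gy,gy')<r$ for all $g\in G$ forces $y=y'$, which makes $r/2$ an expansive constant for $(Y,G)$. So suppose $d(gy,gy')<r$ for all $g$ but $y\neq y'$. For each $g\in G$ we have $gy'\in\overline{B(gy,r)}$, so one may define a bijection $\theta_g\colon\pi^{-1}(gy)\to\pi^{-1}(gy')$ sending each point to the unique point of $\pi^{-1}(gy')$ lying in the same sheet over $\overline{B(gy,r)}$; then $d(p,\theta_g(p))<\delta$ for all $p\in\pi^{-1}(gy)$. The key step is the equivariance identity: for all $g\in G$, $s\in S$, $p\in\pi^{-1}(gy)$,
\[
\theta_{sg}(sp)=s\,\theta_g(p).
\]
Indeed, from $d(p,\theta_g(p))<\delta$ we get $d(sp,s\,\theta_g(p))<\lambda/2$, so $sp$ and $s\,\theta_g(p)$ cannot lie in distinct sheets over $\overline{B(sgy,r)}$ (those being $>\lambda/2$ apart); hence they lie in a common sheet, and since $\pi(s\,\theta_g(p))=sgy'$ and $\pi$ is injective on that sheet, $s\,\theta_g(p)$ is the point $\theta_{sg}(sp)$. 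Now set $\Phi_g:=g^{-1}\circ\theta_g\circ g\colon\pi^{-1}(y)\to\pi^{-1}(y')$; the identity above says exactly $\Phi_{sg}=\Phi_g$ for every $s\in S$, so induction on word length gives $\Phi_g=\Phi_e$ for all $g\in G$. Fixing $x\in\pi^{-1}(y)$ and setting $x':=\Phi_e(x)\in\pi^{-1}(y')$, we obtain $\theta_g(gx)=gx'$, hence $d(gx,gx')=d(gx,\theta_g(gx))<\delta<c$ for all $g$; but $x\neq x'$ since $\pi(x)=y\neq y'=\pi(x')$, contradicting expansivity of $(X,G)$.

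The step I expect to be the main obstacle is the equivariance identity $\theta_{sg}(sp)=s\,\theta_g(p)$, and it is here that finite generation is genuinely used: finiteness of $S$ is what lets a single modulus $\delta$ govern all generators simultaneously, which is what makes the family $\{\theta_g\}_{g\in G}$ of fibre-matchings compatible with the $G$-action and hence ``integrable'' to a single pair $(x,x')$ in $X$. For a non-finitely-generated group no such uniform $\delta$ need exist, in keeping with the counterexample of Section~4; the remaining ingredients (the uniform covering radius $r$ with small sheets, and the $\lambda/2$-separation of distinct sheets) are routine consequences of compactness together with Lemma~\ref{lower bound}.
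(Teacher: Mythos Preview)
Your proof is correct and follows essentially the same strategy as the paper: both arguments lift a hypothetical non-separated pair $(y,y')$ in $Y$ to a non-separated pair in $X$ by combining fibre separation (Lemma~\ref{lower bound}), a uniform covering radius with small sheets, uniform continuity over the finite generating set, and induction on word length. The only difference is cosmetic: the paper fixes one lifted pair $(q,q')$ and looks for the \emph{shortest} $g$ at which $gq,gq'$ escape a common sheet, deriving a contradiction at that step; you instead package the local lifts into fibre bijections $\theta_g$, prove the equivariance $\theta_{sg}\circ s=s\circ\theta_g$, and conclude $\Phi_g:=g^{-1}\theta_g g$ is constant, which gives a globally consistent lift $(x,x')$. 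Both are the same word-length induction read in opposite directions.
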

\begin{proof}
Let $d$ be the metric on $X$. Fix a  finite generating subset $S$ of $G$ with $S=S^{-1}$. Then $d$ induces a compatible metric $d'$ defined by
$$d'(x,x'):=\max_{s \in S \cup \{e_G\}} d(sx, sx').$$
In light of Lemma \ref{lower bound}, we may fix an expansive constant $c$ of $(X, G)$ such that $d(x, x') > c$ for every distinct $x, x' \in X$ with $\pi(x)=\pi(x')$. In other words, every fiber of $\pi$ is $(c, d)$-separated.

By the definition of finite-fold covering, there exists a positive integer $k$ such that $|\pi^{-1}(y)| \leq k$ for every $y \in Y$. Then for every $y\in Y$ there is an open  neighborhood $V_y$ of $y$ such that
\begin{enumerate}
    \item $\pi^{-1}(V_y)$ can be partitioned as a disjoint union of some open sets $\{ U_{y,x}: x \in \pi^{-1}(y) \}$;
    \item $\pi$ maps each $U_{y,x}$ homeomorphically onto $V_y$.
\end{enumerate}
WLOG, we may suppose that $\diam (U_{y, x}, d') < c/2$. By the compactness of $Y$, we then obtain a finite open cover $\cV=\{V_{y_1},\cdots, V_{y_n}\}$ of $Y$ and a finite open cover $\cU=\{U_{y_i, x}:  x \in \pi^{-1}(y_i), i=1, \cdots, n \}$ of $X$.

Let $\beta$ be a Lebesgue number for $\mathcal{V}$. Assume that $(Y, G)$ is not expansive. Then there exist distinct $p, p' \in Y$ such that $d(up, up') < \beta$ for all $u \in G$. In particular, $p, p' \in V$ for some $V=V_{y_i} \in \cV$. From our choice of $\cU$ for every $x \in \pi^{-1}(y_i)$ there exists unique $q, q' \in U_{y_i, x}$ such that $\pi(q)=p$ and $\pi(q')=p'$. We shall fix such a $x$ and $q, q'$.

Since $(X, G)$ is expansive and $\mesh(\cU, d') < c/2$, there exists $g \in G\setminus \{e_G\}$ of the smallest word length such that $gq, gq'$ fail to be in a common element of $\cU$. Write $g=sh$ for some $s \in S$ and $h \in G$ with smaller word length. It forces that $hq, hq'$ belong to some element $U$ of $\cU$ and hence $gq, gq' \in sU$. Again since $\mesh(\cU, d') < c/2$, we obtain
$$d(gq, gq')  < c/2.  \eqno(*)$$

On the other hand, since $d(gp, gp') < \beta$, we have $gp, gp' \in V'$ for some $V'=V_{y_j}$ of $\cV$. Then there exists a unique $x' \in \pi^{-1}(y_j)$ such that $gq \in U_{y_j, x'}$. Furthermore, there exists a unique $\overline{q} \in U_{y_j, x'}$ such that $\pi(\overline{q})=gp'$ and hence $d(gq, \overline{q}) < c/2$. Together with $(*)$, we obtain
$$d(gq', \overline{q}) \leq d(gq, gq')  + d(gq, \overline{q}) <c.$$
By our choice of $g$, $gq'\neq \overline{q}$. This contradicts to our assumption that every fiber of $\pi$ is $(c, d)$-separated.

\end{proof}

Combining Lemmas \ref{cover map} with \ref{cover}, we obtain
\begin{prop}\label{iso extension}
Let $\pi: (X,G)\rightarrow (Y,G)$ be an equicontinuous extension between minimal distal systems.   If $(X,G)$ is  expansive, then so is $(Y,G)$.
\end{prop}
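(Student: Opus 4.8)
The plan is to deduce this immediately by concatenating the two preceding lemmas, so the argument is short and all the real content has already been carried out. First I would apply Lemma~\ref{cover map} to $\pi$: since $\pi\colon(X,G)\to(Y,G)$ is an equicontinuous extension between minimal distal systems and $(X,G)$ is expansive, there is $N\in\mathbb N$ with $|\pi^{-1}(y)|=N$ for every $y\in Y$ and $\pi$ is a covering map. Consequently $\sup_{y\in Y}|\pi^{-1}(y)|=N<\infty$, that is, $\pi$ is a finite-fold covering in the sense defined just before Lemma~\ref{lower bound}.

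Next I would feed this into Lemma~\ref{cover}. Its hypotheses are: $G$ finitely generated (which must be assumed in this proposition, as it is required by Lemma~\ref{cover}, and is the one place finiteness of the generating set is actually used — it is inherited from the setting of Theorem~\ref{main result}), $\pi$ a finite-fold covering (just established), and $(X,G)$ expansive (given). The lemma then yields precisely that $(Y,G)$ is expansive, completing the proof. Note that we never need to verify that $(Y,G)$ is itself minimal or distal, since Lemma~\ref{cover} is stated for an arbitrary factor map.

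There is no genuine obstacle at this step — everything hard is front-loaded into the two lemmas: Lemma~\ref{cover map} extracts the covering-space structure of $\pi$ from expansivity together with the openness of factor maps between minimal distal systems (Lemma~\ref{open map}), while Lemma~\ref{cover} transports expansivity from $X$ down to $Y$ along a finite-fold covering via a word-length induction balanced against a Lebesgue number. The only points to keep honest are bookkeeping: identifying ``covering map with all fibres of cardinality $N$'' with ``finite-fold covering'', and recording that finite generation of $G$ is truly needed here, since the example of Section~4 shows the conclusion can fail without it.
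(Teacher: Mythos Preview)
Your proposal is correct and matches the paper's own proof, which simply reads ``Combining Lemmas~\ref{cover map} with~\ref{cover}, we obtain'' the proposition. You have in fact been more careful than the paper in flagging that the finite-generation hypothesis on $G$ is tacitly needed here (it is not in the proposition's statement but is required by Lemma~\ref{cover}).
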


We remark that in general the expansivity does not pass to the factor systems (even for $\mathbb{Z}$-actions).
The following proposition shows that strict projective limit of dynamical systems ruin expansivity.
\begin{lem}\label{inverse limit}
Let $\eta$ be a countable limit ordinal and $\{(X_{\alpha}, G)\}_{\alpha< \eta}$ be a projective system with factor maps $\{\pi_{\alpha, \beta}\}_{\beta<\alpha<\eta}$.
Suppose $(X,G)=\underset{\longleftarrow}{\lim}(X_{\alpha}, G)_{\alpha<\eta}$ and every $\pi_{\alpha, \beta}$ is nontrivial. Then $(X, G)$ is not expansive.
\end{lem}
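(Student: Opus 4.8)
The plan is to establish the statement directly: given an arbitrary $c>0$, I will produce two distinct points $x\neq x'$ of $X$ with $\sup_{g\in G}d(gx,gx')\le c$, which shows that $c$ is not an expansive constant; as $c$ is arbitrary, $(X,G)$ is not expansive. Since expansivity of a compact system is independent of the choice of compatible metric, I may fix a convenient one on $X$: enumerate the (countably many) ordinals below the limit ordinal $\eta$ as $\beta_1,\beta_2,\dots$, put on each $X_{\beta_k}$ a compatible metric $d_k\le 1$, and use on $\prod_{\alpha<\eta}X_\alpha$, hence on the closed subset $X$, the metric $d(x,x')=\sum_{k\ge1}2^{-k}d_k(x_{\beta_k},x'_{\beta_k})$.

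Now fix $c>0$ and choose $N$ with $2^{-N}<c$. Set $\gamma=\max\{\beta_1,\dots,\beta_N\}$, an ordinal $<\eta$. Because $\eta$ is a limit ordinal, there is an ordinal $\alpha$ with $\gamma<\alpha<\eta$ (say $\alpha=\gamma+1$); this is where the limit-ordinal hypothesis is used, and it is precisely the situation of clause (iv) of Theorem \ref{distal structure}. The factor map $\pi_{\alpha,\gamma}\colon X_\alpha\to X_\gamma$ is nontrivial, hence — being already a continuous surjection between compacta — not injective, so I may pick $u\neq v$ in $X_\alpha$ with $\pi_{\alpha,\gamma}(u)=\pi_{\alpha,\gamma}(v)$. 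The next step is to lift $u$ and $v$ into the inverse limit: the canonical projection $p_\alpha\colon X\to X_\alpha$ is surjective, so choose $x\in p_\alpha^{-1}(u)$ and $x'\in p_\alpha^{-1}(v)$.

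These points work. They are distinct, since $x_\alpha=u\neq v=x'_\alpha$; on the other hand $x_\gamma=\pi_{\alpha,\gamma}(u)=\pi_{\alpha,\gamma}(v)=x'_\gamma$, and applying the bonding maps $\pi_{\gamma,\beta_k}$ (trivially if $\beta_k=\gamma$) gives $x_{\beta_k}=x'_{\beta_k}$ for every $k\le N$. Since $G$ acts diagonally, $(gx)_{\beta_k}=g x_{\beta_k}=g x'_{\beta_k}=(gx')_{\beta_k}$ for all $g\in G$ and all $k\le N$, whence
$$d(gx,gx')=\sum_{k>N}2^{-k}d_k\big((gx)_{\beta_k},(gx')_{\beta_k}\big)\le\sum_{k>N}2^{-k}=2^{-N}<c$$
for every $g\in G$, as required.

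The metric bookkeeping, the ordinal arithmetic ($\gamma<\eta$ and the existence of $\alpha$ strictly between $\gamma$ and $\eta$), and the use of the compatibility relations are all routine. The one point I expect to require genuine care — the crux of the argument — is the surjectivity of the projection $p_\alpha\colon X\to X_\alpha$. This fails for general inverse systems but holds here because every bonding map is a factor map, hence surjective, and the spaces are compact. I would either invoke the classical fact that the canonical projections out of an inverse limit of nonempty compact Hausdorff spaces with surjective bonding maps are surjective, or prove it on the spot by observing that $p_\alpha^{-1}(w)$ is itself the inverse limit of the inverse subsystem obtained by replacing $X_\beta$ with $\pi_{\beta,\alpha}^{-1}(w)$ for $\beta\ge\alpha$ and with $\{\pi_{\alpha,\beta}(w)\}$ for $\beta\le\alpha$ — a system of nonempty compacta with surjective bonding maps, hence with nonempty inverse limit.
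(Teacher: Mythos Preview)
Your proof is correct and follows essentially the same approach as the paper's: find two distinct points of $X$ that agree on all coordinates below some fixed $\alpha<\eta$, and observe that such points (and all their $G$-translates, since $G$ acts diagonally) stay within any prescribed distance of each other. The paper's version is terser --- it fixes an arbitrary compatible metric, picks $\alpha$ so that fibers of $\pi_{\eta,\alpha}$ have diameter less than the putative expansive constant, and then asserts the existence of a nontrivial fiber --- while you make the metric explicit and spell out both the ordinal bookkeeping and the surjectivity of the canonical projections; but the underlying idea is identical.
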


\begin{proof} Fix a compatible metric $d$ on $X$. Assume to the contrary that $(X, G)$ is expansive with an expansive constant $c$. Pick $\alpha < \eta$ such that $d(x, x') < c$ for any $x, x' \in X$ with $\pi_{\eta, \alpha}(x)=\pi_{\eta, \alpha}(x')$.   Since every $\pi_{\alpha, \beta}$ is nontrivial, there exists distinct $u, u' \in X$ such that $\pi_{\eta, \alpha}(u)=\pi_{\eta, \alpha}(u')$.  It forces that $d(gu, gu') < c$. This contradicts to the assumption of expansiveness.
\end{proof}

\begin{prop}\label{same period}
Let  $G$ be a finitely generated group and $(X,G)$ be an infinite dynamical system. If $(X, G)$ is {\it pointwise periodic}, i.e. each orbit is finite,  then $(X,G)$ is not expansive.
\end{prop}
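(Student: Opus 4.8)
The plan is to argue by contradiction: suppose $(X,G)$ is expansive with expansive constant $c$ while $X$ is infinite, and derive a contradiction. The organizing device is to stratify $X$ by orbit size. For $n\in\mathbb{N}$ set $P_n:=\{x\in X:|Gx|\le n\}$. I would first record three easy facts: each $P_n$ is closed (it is the intersection, over all tuples $(g_0,\dots,g_n)\in G^{n+1}$, of the closed sets $\bigcup_{0\le i<j\le n}\{x:g_ix=g_jx\}$); each $P_n$ is $G$-invariant (since $|G(gx)|=|Gx|$); and $X=\bigcup_n P_n$ because every orbit is finite. In particular each $P_n$, being a closed invariant subset, is a subsystem, hence expansive with the same constant $c$. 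The proof then splits according to whether some stratum is infinite.

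Case $1$: some $P_N$ is infinite. Here I would invoke finite generation of $G$ in the form: a finitely generated group has only finitely many subgroups of index $\le N$, so the intersection $H$ of all of them is again of finite index. For every $x\in P_N$ one has $[G:G_x]=|Gx|\le N$, hence $H\subseteq G_x$; thus $H$ fixes $P_N$ pointwise and the $G$-action on $P_N$ factors through a finite group. Consequently $(P_N,G)$ is equicontinuous (a finite family of homeomorphisms of a compactum is equicontinuous); being also expansive, it must be finite, since equicontinuous expansive systems are finite. This contradicts that $P_N$ is infinite.

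Case $2$: every $P_n$ is finite. Then $X=\bigcup_n P_n$ is countable, hence zero-dimensional: for $x\in X$ and $\varepsilon>0$ only countably many values are realized by $d(x,\cdot)$, so some $r<\varepsilon$ has empty sphere, giving a clopen ball $B(x,r)$. Also $(X,G)$ is distal, because for $x\neq y$ the set $\{(gx,gy):g\in G\}$ is a finite subset of $X\times X$ disjoint from the diagonal, so $\inf_{g}d(gx,gy)>0$. Now Theorem~\ref{AGW07} applies (as $G$ is finitely generated, $(X,G)$ is distal, and $X$ is zero-dimensional) and yields that $(X,G)$ is equicontinuous; together with expansivity this again forces $X$ to be finite, a contradiction.

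Most steps are routine once the stratification is in place; the crux is pairing each case with the right tool, and this is also where the hypotheses are genuinely used. In Case~$1$ the "finitely many subgroups of bounded index'' fact (which needs $G$ finitely generated) turns a bounded stratum into an equicontinuous finite-group action; in Case~$2$ the observation that a countable compactum is zero-dimensional is exactly what makes Theorem~\ref{AGW07} (again using finite generation) available. The one bookkeeping point to watch is that expansivity descends to the closed invariant subsystems $P_n$ with an unchanged constant, since that is what legitimizes both reductions — matching the fact, illustrated by the example in Section~4, that finite generation cannot be dropped.
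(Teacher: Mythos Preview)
Your argument is correct, and your Case~2 coincides with the paper's treatment. Case~1, however, is handled quite differently. The paper takes $p$ minimal with $E_p=P_p$ infinite and splits again: if every cluster point of $E_p$ lies in the finite set $E_{p-1}$, then a convergent sequence from $E_p$ together with $E_{p-1}$ yields an infinite countable (hence zero-dimensional) invariant subsystem, to which Theorem~\ref{AGW07} applies; otherwise some cluster point $z$ of $E_p$ already lies in $E_p$, and the paper runs a direct word-length--minimality argument (choosing $c$ so that the finite orbit $Gz$ is $2c$-separated, picking a nearby $x_n\in E_p$, and finding the shortest $g$ with $d(gx_n,gz)\ge\delta$) to contradict expansivity by hand. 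Your route is shorter and more conceptual: the classical fact that a finitely generated group has only finitely many subgroups of index at most $N$ makes the $G$-action on $P_N$ factor through the finite quotient $G/H$, hence equicontinuous, and the ``equicontinuous $+$ expansive $\Rightarrow$ finite'' principle finishes at once. The paper's approach has the virtue of being more self-contained---it does not import the subgroup-counting fact---but pays for this with a further case split and an ad hoc metric argument.
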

\begin{proof}

Set $E_0=\emptyset$ and  $E_k=\{x: |Gx|\leq k\}$ for each positive integer $k$. If every $E_k$ is finite,  then $X$ is countable and hence $0$-dimensional. Note that pointwise periodic systems are distal. Applying Theorem \ref{AGW07}, we have $(X, G)$ is equicontinuous. Since $X$ is infinite, it forces that $(X, G)$ is not expansive.  Thus we may assume $E_k$ is infinite for some $k$ and $p$ is the smallest such that $E_p$ is infinite. In particular, $E_{p-1}$ is finite.

If all cluster points of $E_p$ sit inside $E_{p-1}$, we can choose some convergent pairwise distinct sequence $x_n$'s in $E_p$ approaching to a cluster point of $E_p$. Then the union of $G_{x_n}$'s with $E_{p-1}$ makes an infinite subsystem of zero dimension. By Theorem \ref{AGW07}, this subsystem is equicontinuous, which forces that $(X, G)$ is not expansive.
 Thus we may assume that there is a cluster point $z$ of $E_p$ still sitting inside $E_p$.

Let $x_n$'s be a sequence in $E_p \setminus \{z\}$ converging to $z$. Assume to the contrary that $(X, G)$ is expansive. Pick an expansive constant $c$ of $(X, G)$ such that $d(x, x') > 2c$ for all distinct element $x, x'$ of $Gz$. Fix a finite generator set $S$ of $G$ with $S=S^{-1}$.
Take $0 < \delta < c$ such that
$d(sx, sx') < c$
for every $s \in S$ and $x, x' \in X$ with $d(x, x') < \delta$.
Write $Gz=\{z, g_1z, \cdots, g_{p-1}z\}$ and $g_0=e_G$. Then as $n$ large enough, we have for every $i=0,1,\cdots, p-1$,
$d(g_ix_n, g_iz) < \delta$.
We shall fix such a $x_n$.

By expansivity, there exists $g \in G\setminus \{e_G\}$ such that $d(gz_n, gz) \geq \delta$. Assume that $g$ is such an element with the smallest word length. Then $g=sh$ for some $s \in S$ and $h \in G$ with smaller length.  It follows that $d(hz_n, hz) < \delta$ and hence $d(gz_n, gz) < c$. On the other hand, $gz_n=g_iz_n$ for some $0 \leq i \leq p-1$. So
$$d(gz, g_iz) < d(gz, gz_n)+ d(gz_n, g_iz) < c +\delta < 2c.$$
By our choice of  $c$, it forces that $gz = g_iz$. Therefore,
$$d(gz_n, gz)=d(g_iz_n, g_iz) < \delta,$$
which contradicts to the choice of $g$.

\end{proof}

Recall that a {\it general subshift of symbolic systems} is a subsystem of $(A^G, G)$ carrying the shift action of $G$ for some finite set $A$ (also called Bebutov system).
Since every subshift of symbolic systems is expansive, it follows that
\begin{cor}
Let $G$ be a finitely generated group and $(X, G)$ is a subshift of symbolic systems. If $(X, G)$ is pointwise periodic, then it is finite.
\end{cor}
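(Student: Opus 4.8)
The plan is to deduce this immediately from Proposition~\ref{same period}, once we record the elementary fact that every general subshift is expansive. First I would make that expansivity explicit. Fix a finite set $A$ with $(X,G)\subseteq(A^G,G)$; since $G$ is finitely generated it is countable, so we may enumerate $G=\{g_0,g_1,g_2,\dots\}$ with $g_0=e_G$ and equip $A^G$ with the compatible metric $d(x,y)=\sum_{i\geq 0}2^{-i}\rho(x(g_i),y(g_i))$, where $\rho$ denotes the $0$--$1$ discrete metric on $A$. Then $d(x,y)\geq 1$ whenever $x(e_G)\neq y(e_G)$. Now if $x\neq y$ in $X$ there is some $g\in G$ with $x(g)\neq y(g)$, and the shift action provides an $h\in G$ (namely $h=g^{-1}$ for the usual left shift) with $(hx)(e_G)\neq (hy)(e_G)$, so $d(hx,hy)\geq 1$. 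Hence $(X,G)$ is expansive with expansive constant $c=1/2$.

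Next I would argue by contradiction: suppose $X$ is infinite. A subshift is in particular a continuous action of $G$, so $(X,G)$ is an infinite dynamical system, and ``pointwise periodic'' is precisely the hypothesis of Proposition~\ref{same period}. That proposition then yields that $(X,G)$ is not expansive, contradicting the previous paragraph. Therefore $X$ must be finite.

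I do not expect any genuine obstacle here: the entire content beyond invoking Proposition~\ref{same period} is the standard observation that a subshift separates distinct points already at a single coordinate, which is exactly what forces expansivity. If one prefers not to pin down a particular metric on $A^G$, one can instead cite the well-known general statement (see e.g.\ \cite{A89}) that every subshift of $(A^G,G)$ is expansive and feed it directly into Proposition~\ref{same period}; the conclusion is the same.
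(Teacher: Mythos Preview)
Your proposal is correct and follows exactly the paper's route: the paper simply states ``since every subshift of symbolic systems is expansive'' and then invokes Proposition~\ref{same period}, which is precisely what you do (with the standard expansivity argument for subshifts spelled out). There is nothing to add.
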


The following example indicates that the finitely generating condition on $G$ in Proposition \ref{same period} is necessary.

\begin{example}
Denote by $A$ the subspace $\{0,1, 1/2, 1/3, \cdots\}$ of $\mathbb{R}$ under the   Euclidean topology and $X$ the product space $A\times \mZ/2\mZ$. For every $n \in \mathbb{N}$ define a homeomorphism $\sigma_n \colon X \to X$ by sending $(x, y)$ to $(x, y+\overline{1})$ if $x=1/n$ and $(x,y)$ elsewhere. Then $\{\sigma_n\}_{n\geq 1}$ generate a countable group $G$  acting on $X$. Clearly $G$ is not finitely generated and $(X, G)$ is pointwise periodic and expansive.
\end{example}

Now we are ready to prove the main theorem of the paper.

\begin{proof} [Proof of Theorem \ref{main result}]

By Lemma \ref{min sets}, $(X,G)$ is the disjoint union of minimal subsets.
In light of Proposition \ref{same period}, without loss of generality, we may assume that $(X, G)$ is minimal.

 As in  Theorem \ref{distal structure} we can write $(X, G)$ as a projective limit of  extensions $\{\pi_{\alpha, \beta} \colon X_\alpha \to X_\beta\}_{\beta < \alpha \leq \eta}$ with the height $\eta$. We do induction on the height $\eta$ of $(X, G)$.
For $\eta=1$, it means that  $(X, G)$ is equicontinuous and the conclusion obviously holds. Suppose that the conclusion holds for any minimal distal system with height less than $\eta$. In light of Lemma \ref{inverse limit}, $\eta$ can only be a successor ordinal. So $(X, G)$ is the isometric extension of $(X_{\eta -1}, G)$. By
Proposition \ref{iso extension}, $(X_{\eta-1}, G)$
is expansive. Applying the induction assumption, $X_{\eta -1}$ is finite. From  Lemma \ref{cover map}, $(X, G)$ is a finite-fold covering of
a finite system and hence is finite again.
\medskip

\end{proof}

As a consequence, we have
\begin{cor}
Let $G$ be a finitely generated group and $(X, G)$ an infinite and expansive system. Then there exists a nontrivial proximal pair in $(X, G)$.
\end{cor}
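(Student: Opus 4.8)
The plan is to obtain this corollary directly from Theorem~\ref{main result} by contraposition, so there is very little to do. Recall that, by the definitions recalled in the introduction, a system $(X,G)$ that contains no nontrivial proximal pair is exactly what it means for $(X,G)$ to be \emph{distal}. Hence I would begin by assuming, toward a contradiction, that the infinite expansive system $(X,G)$ has no nontrivial proximal pair; this assumption is precisely the statement that $(X,G)$ is distal.

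Now $(X,G)$ is an action of the finitely generated group $G$ on the compactum $X$ which is simultaneously distal and expansive, so Theorem~\ref{main result} applies and forces $X$ to be finite. This contradicts the standing hypothesis that $X$ is infinite, and therefore $(X,G)$ must admit a nontrivial proximal pair. I do not expect any genuine obstacle here: all the work has already been done in establishing Theorem~\ref{main result}, and the corollary is merely its contrapositive phrased in terms of proximality. If desired, one could add a sentence noting that the finite-generation hypothesis is again essential, since the distal (hence proximal-pair-free) infinite expansive action constructed in Section~4 over a non-finitely-generated group shows the conclusion can fail in its absence.
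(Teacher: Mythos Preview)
Your argument is correct and is exactly the approach the paper intends: the corollary is stated with the words ``As a consequence, we have'' and no further proof, since it is just the contrapositive of Theorem~\ref{main result} once one recalls that distality means the absence of nontrivial proximal pairs. There is nothing to add.
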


\section{A counterexample}
In this section, we give a counterexample to show that the finitely generating  condition in Theorem \ref{main result} is indispensable.
\medskip

Recall that  the {\it dyadic odometer} is the additive group $\Sigma:=(\{0,1\}^{\mathbb{Z}_{+}},+)$ associated with the homeomorphism $T_0$ sending $x$ to $x+(1,0,0,\cdots)$.   For $\xi=(\xi_1,\xi_2,\cdots), \eta=(\eta_1,\eta_2,\cdots) \in \Sigma$, define
$ d_1(\xi,\eta)=\frac{1}{n}$ for $n=\min\{ i\in\mathbb{Z}_{+}: \xi_i\neq \eta_i\}$.
Then $d_1$ is a metric on $\Sigma$ and is $T_0$-invariant.

Let $X=\Sigma\times \mathbb{Z}/2\mathbb{Z}$ be endowed a metric via
\[ d\Big( (\xi, a), (\eta, b) \Big)=d_1(\xi,\eta)+\theta(a,b),\]
where $\theta$ is the discrete metric on $\mZ/2\mZ$.
Then $(X,d)$ is an infinite compactum of zero dimension. It is clear that $g$ induces an isometry $T \colon X \to X$ by sending $(\xi, a)$ to $(T_0(\xi), a)$.
For every $n\in\mathbb{Z}_{+}$, set $\Sigma_n= \{0,1\}^{\{1,2,\cdots,n\}}$ and $\pi_n \colon \Sigma \to \Sigma_n$ the projection map.
Each $\omega\in\Sigma_n$ induces a permutation $ f_{\omega}: X\rightarrow X$ by
\[f_{\omega}(\xi, a)=\begin{cases}(\xi, a+\overline{1}), & {\rm{if}} ~~\pi_n(\xi)=\omega;\\ (\xi, a),&  {\rm{else.}} \end{cases}\]
Let $G$ be the group generated by  $S=\{T\} \cup\bigcup_{n\in\mathbb{Z}_{+} }\{ f_{\omega}: \omega\in \Sigma_n \}$. It is clear that $G$ is non-finitely-generated  and $G$ acts on $X$ by homeomorphisms.

Since $(X, G)$  is an equicontiuous extension of the distal action $(\Sigma, G)$, it is distal. We check that $1/2$ is an expansive constant for $(X,G)$.
For distinct $(\xi, a) , (\eta, b)\in X$, if  $a\neq b$, we have $ d((\xi, a) , (\eta, b))\geq 1>1/2$. Thus we may assume $a=b$. Then  $\xi\neq\eta$ and hence $\xi_n\neq\eta_n$ for some $n\in\mathbb{Z}_{+}$.  Put $\omega=\pi_n(\xi)$. We have
\[d(f_{\omega}(\xi, a) ,f_{\omega} (\eta, b))=d((\xi, a+\overline{1}), (\eta, a))\geq \theta(a+\overline{1}, a)=1>1/2.\]
It is also easy to see that $(X, G)$ is minimal.

In summary, we obtain the following proposition, which  is a complement to Theorem \ref{AGW07}.
\begin{prop}
There exists a zero dimensional space $X$ and a minimal, distal but not equicontinuous action on $X$ by a non-finitely-generated group $G$.
\end{prop}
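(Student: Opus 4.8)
The plan is to verify that the triple $(X, G)$ exhibited in the construction above already satisfies every requirement of the statement, so that the proof reduces to assembling the facts checked above together with one short extra observation. First I would record that $X = \Sigma \times \mathbb{Z}/2\mathbb{Z}$ is zero-dimensional: the dyadic odometer $\Sigma$ is homeomorphic to the Cantor set, $\mathbb{Z}/2\mathbb{Z}$ is finite discrete, and a finite product of zero-dimensional compacta is zero-dimensional. Minimality and distality of $(X,G)$ have been established above, the latter because $(X,G)$ is an equicontinuous extension of the minimal distal odometer action $(\Sigma,G)$.

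Next I would justify that $G$ is not finitely generated. Any finitely generated subgroup is contained in $H := \langle T, f_{\omega_1}, \dots, f_{\omega_s}\rangle$ with all $\omega_j$ of level at most some $N$. Since the odometer $T_0$ descends to a permutation of $\Sigma_n$ for each $n$, conjugation by $T$ permutes, level by level, the finite families $\{f_\omega : \omega \in \Sigma_n\}$; hence the normal closure $K$ in $H$ of $\{f_{\omega_j}\}$ is a finite abelian group, every element of which fixes the first coordinate and acts on $X$ by a map of the form $(\xi, a)\mapsto (\xi,\, a + \chi(\pi_N(\xi)))$ for some $\chi\colon\Sigma_N\to\mathbb{Z}/2\mathbb{Z}$, and $H = K\langle T\rangle$. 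Now fix $\omega\in\Sigma_{N+1}$. The generator $f_\omega$ fixes the first coordinate but flips the second exactly on the clopen set $\pi_{N+1}^{-1}(\omega)\times\mathbb{Z}/2\mathbb{Z}$, which is not a union of level-$N$ cylinders; so $f_\omega$ is neither a nonzero power of $T$ (those move the first coordinate, $T_0$ being aperiodic on the infinite set $\Sigma$) nor an element of $K$. Therefore $f_\omega\notin H$ and $G\neq H$.

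The only remaining point is that $(X,G)$ is not equicontinuous. For this I would invoke the observation from Section 2 that an equicontinuous system cannot be expansive unless it is finite: since $X$ is infinite and $(X,G)$ was shown above to be expansive with expansive constant $1/2$, it is not equicontinuous. One can also see the failure of equicontinuity directly: given $\delta>0$, choose $\xi\neq\eta$ in $\Sigma$ first differing at a coordinate $m$ with $1/m<\delta$; then $d\big((\xi,\overline{0}),(\eta,\overline{0})\big)<\delta$, while $f_{\pi_m(\xi)}$ sends this pair to $\big((\xi,\overline{1}),(\eta,\overline{0})\big)$, which are at distance at least $1$. Assembling these observations completes the proof. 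There is no genuine obstacle here, since the example is explicit; the only step needing a moment's care is the non-finite-generation of $G$ sketched above, and the conceptual content is simply that the finite-generation hypothesis of Theorem \ref{main result} (equivalently of Theorem \ref{AGW07}) is indispensable, the mechanism being that $G$ is built from infinitely many commuting involutions $f_\omega$ acting on finer and finer clopen partitions, each witnessing expansivity at its own scale but jointly destroying equicontinuity.
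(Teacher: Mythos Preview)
Your proof is correct and follows exactly the paper's approach: you verify that the explicit system $(X,G)=(\Sigma\times\mathbb{Z}/2\mathbb{Z},\,\langle T,\{f_\omega\}\rangle)$ constructed above is zero-dimensional, minimal, distal (as an equicontinuous extension of the odometer), expansive, and hence not equicontinuous. You supply more detail than the paper does---in particular a full argument for the non-finite-generation of $G$ and a direct check of non-equicontinuity---where the paper simply declares these facts ``clear'' or relies implicitly on the expansive/infinite observation from Section~2.
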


\end{document}